\theoremstyle{plain}
\newtheorem{thm}{Theorem}
\newtheorem{lem}[thm]{Lemma}
\newtheorem{prop}[thm]{Proposition}
\newtheorem{cor}[thm]{Corollary}
\theoremstyle{definition}
\newproof{pf}{Proof}
\newproof{pol}{Proof of Lemma \ref{odd dicycle}}
\newproof{pop}{Proof of Proposition \ref{balanced diagonally simillar}}
\begin{document}

\begin{frontmatter}

\title{Skew-signings of positive weighted digraphs }

\author[B]{Kawtar Attas}
\ead{kawtar.attas@gmail.com}

\author[B]{Abderrahim  Boussa\"{\i}ri\corref{cor1}}
\ead{aboussairi@hotmail.com}

\author[B]{Mohamed Zaidi}
\ead{zaidi.fsac@gmail.com}

\cortext[cor1]{Corresponding author}
%\cortext[cor2]{Principal corresponding author}

\address[B]{Facult\'e des Sciences A\"{\i}n Chock, D\'epartement de
Math\'ematiques et Informatique,  Laboratoire de Topologie, Alg\`{e}bre, G\'{e}om\'{e}trie et Math\'{e}matiques discr\`{e}tes

Km 8 route d'El Jadida,
BP 5366 Maarif, Casablanca, Maroc}

\begin{abstract}
 An \emph{arc-weighted digraph }is a pair $(D,\omega)$ where $D$ is a digraph
and $\omega$ is an \emph{arc-weight function} that assigns\ to each arc $uv$
of $D$ a nonzero real number $\omega(uv)$. Given an arc-weighted digraph
$(D,\omega)$ with vertices $v_{1},\ldots,v_{n}$, the \emph{weighted adjacency
matrix} of $(D,\omega)$ is defined as the matrix $A(D,\omega)=[a_{ij}]$ where
$a_{ij}=\omega(v_{i}v_{j})$, if $v_{i}v_{j}\ $an arc of $D$ and $0\ $%
otherwise. Let $(D,\omega)$ be a  positive arc-weighted digraphs and assume that
$D$ is loopless and symmetric. A \emph{skew-signing} of $(D,\omega)$ is an
arc-weight function $\omega^{\prime}$ such that $\omega^{\prime}(uv)=\pm
\omega(uv)$ and $\omega^{\prime}(uv)\omega^{\prime}(vu)<0$ for every arc $uv$
of $D$. In this paper, we give necessary and sufficient conditions under which
the characteristic polynomial of $A(D,\omega^{\prime})$ is the same for every
skew-signing $\omega^{\prime}$ of $(D,\omega)$. Our Main Theorem generalizes a
result of Cavers et al (2012) about skew-adjacency matrices of graphs.
\end{abstract}

\begin{keyword}
Arc-weighted digraphs; Skew-signing of a digraph; Weighted adjacency matrix.

\MSC  05C22, 05C31, 05C50
\end{keyword}

\end{frontmatter}

%%%%%%%%%%%%%%%%%%%%%%%%%%%%%%%%%%%%%%%%%%%%%%%%%%%%%%%%%%%%%%%%%%%%%%%%%%%%%%%%%%%%%
%%%%%%%%%%%%%%%%%%%%%%%%%%%%%%%%%%%    Introduction       %%%%%%%%%%%%%%%%%%%%%%%%%%%%%%%%%%%%%%%%%
%%%%%%%%%%%%%%%%%%%%%%%%%%%%%%%%%%%%%%%%%%%%%%%%%%%%%%%%%%%%%%%%%%%%%%%%%%%%%%%%%%%%%%

\section{Introduction}

 Let $G$ be a simple undirected and finite graph.
 An \emph{orientation} of $G$ is an assignment of a direction to
each edge of $G$ so that we obtain a directed graph $\overrightarrow{G}$. Let
$\overrightarrow{G}$ be an orientation of $G$. With respect to a labeling
$v_{1},\ldots,v_{n}$ of the vertices of $G$, the \emph{skew-adjacency} matrix of
$\overrightarrow{G}$ is the real skew-symmetric matrix $S(\overrightarrow
{G})=\left[  s_{ij}\right]  $, where $s_{ij}=1$ and $s_{ij}=-1$ if $v_{i}%
v_{j}$ is an arc of $\overrightarrow{G}$, otherwise $s_{ij}=s_{ji}=0$. The
\emph{skew-characteristic polynomial} of $\overrightarrow{G}$ is defined as
the characteristic polynomial of $S(\overrightarrow{G})$. This definition is
correct because skew-adjacency matrices of $\overrightarrow{G}$ with respect
to different labelings are permutationally similar and so have the same
characteristic polynomial. There are several recent works about
skew-characteristic polynomials of oriented graphs, one can see for example
\cite{Anuradha, Cavers, Cui, Gong, Shader, Yaoping}. Given a graph $G$, an open problem is to
find the number of possible orientations of  $G$ with distinct
skew-characteristic polynomials. In particular it is of interest to know
whether all orientations of $G$ can have the same skew-characteristic
polynomial. The following theorem, obtained by Cavers et al. \cite{Cavers}
gives an answer to this question.

\begin{thm}
\label{th cavers}The orientations of a graph $G$ all have the same
characteristic polynomial if and ond only if $G$ has no cycles of even length.
\end{thm}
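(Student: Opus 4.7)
The plan is to expand $\phi(\overrightarrow{G},x)=\det(xI-S(\overrightarrow{G}))$ as a sum over permutations, group the terms by cycle structure, and fold together the two traversals of each underlying undirected cycle of $G$. Since $S(\overrightarrow{G})$ has both $(i,j)$ and $(j,i)$ entries nonzero with opposite signs $\pm 1$ for every edge of $G$, the standard Coates-type expansion reads
\[
\phi(\overrightarrow{G},x)\;=\;\sum_L (-1)^{c(L)}\,x^{n-|V(L)|}\prod_{C\in L}w(C),
\]
where $L$ ranges over vertex-disjoint unions of directed cycles in the symmetric digraph of $S(\overrightarrow{G})$, $c(L)$ counts its cycles, and $w(C)$ is the product of arc weights around $C$. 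The key combinatorial input I would establish is: (i) a digon on an edge of $G$ has $w(C)=-1$ and, combined with its $(-1)^{c(L)}$ factor, contributes $+1$; (ii) the two traversals of an odd cycle of $G$ give opposite weights and cancel; and (iii) the two traversals of an even cycle of length $k$ give equal weights summing to $2(-1)^{f(C)}$, where $f(C)$ is the number of arcs of $\overrightarrow{G}$ aligned with a fixed cyclic direction (well-defined mod~$2$ when $k$ is even). Regrouping by the underlying \emph{elementary subgraph} $H$ of $G$ (a vertex-disjoint union of edges and even cycles of length $\geq 4$) yields
\[
\phi(\overrightarrow{G},x)=\sum_H x^{n-|V(H)|}\,2^{c_e(H)}\,(-1)^{c_e(H)+\sum_{C\in H_{\mathrm{even}}}f(C)},
\]
where $c_e(H)$ is the number of even-cycle components of $H$.

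The forward direction is then immediate: if $G$ has no cycle of even length, every elementary subgraph is a matching, $c_e(H)=0$, and the sum collapses to the matching polynomial $\sum_M x^{n-2|M|}$, which manifestly does not depend on $\overrightarrow{G}$.

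For the converse, assume $G$ has an even cycle and let $2k$ denote the minimum such length. On exactly $2k$ vertices, an elementary subgraph can contain neither a shorter even cycle (by minimality) nor two disjoint even cycles (which would span $\geq 4k$ vertices), so it is either a size-$k$ matching of $G$ or a single cycle of $G$ of length $2k$. The coefficient of $x^{n-2k}$ in $\phi(\overrightarrow{G},x)$ is therefore
\[
m_k(G)\;-\;2\sum_{\substack{C'\ \text{even cycle of }G\\ |C'|=2k}}(-1)^{f(C')},
\]
where $m_k(G)$, the number of size-$k$ matchings of $G$, is orientation-invariant. Fix a reference orientation of $G$ and a cyclic direction on each length-$2k$ cycle, and identify each orientation with an element $\sigma\in\{\pm 1\}^{E(G)}$. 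A short computation gives $(-1)^{f(C')}=\eta(C')\prod_{e\in E(C')}\sigma(e)$ for reference signs $\eta(C')\in\{\pm 1\}$. The orientation-dependent term is thus a multilinear polynomial $F(\sigma)=\sum_{C'}\eta(C')\prod_{e\in E(C')}\sigma(e)$. Its monomials are pairwise distinct (a cycle in a simple graph is determined by its edge set) and have nonzero $\pm 1$ coefficients; since distinct such monomials are linearly independent as functions on $\{\pm 1\}^{E(G)}$, $F$ is not a constant function. Two orientations on which $F$ takes different values give distinct coefficients of $x^{n-2k}$, and hence distinct characteristic polynomials.

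The main obstacle is the cycle-decomposition formula itself: the sign bookkeeping that combines $(-1)^{c(L)}$ with the arc-weight products $w(C)$ must be done carefully enough to reveal the odd-cycle cancellation and the even-cycle packaging. Once that identity is in place, the forward direction is immediate and the converse reduces to the elementary observation that a nonzero multilinear polynomial in $\pm 1$-variables cannot be a constant function.
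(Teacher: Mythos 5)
Your proof is correct, and it is worth noting that the paper never proves Theorem \ref{th cavers} directly: it recovers it as the all-weights-equal-to-one case of its main theorem, whose proof runs through cycle-symmetric digraphs, the quantities $\eta^{\pm}_{\omega'}$, and an inductive arc-by-arc argument (Lemma \ref{intermid}) culminating in a perturbation that flips the signs on a single arc pair $\{e_{1},e_{1}^{\ast}\}$ of a shortest even cycle and shows the corresponding coefficient changes. Your forward direction coincides with the paper's mechanism (its Formulas (\ref{2}) and (\ref{4})): digons contribute $+1$, the two traversals of an odd cycle cancel, and the two traversals of an even cycle reinforce, so absence of even cycles collapses the characteristic polynomial to the matching generating function. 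Where you genuinely diverge is the converse: instead of the paper's induction establishing that some signing satisfies $\eta^{+}_{\omega_{0}'}(e_{1})\neq\eta^{-}_{\omega_{0}'}(e_{1})$, you encode orientations as points of $\{\pm 1\}^{E(G)}$ and observe that the orientation-dependent part of the coefficient of $x^{n-2k}$ (with $2k$ the shortest even cycle length, so that only $k$-matchings and single $2k$-cycles survive on $2k$ vertices) is a nonzero linear combination of distinct nontrivial multilinear monomials, hence a non-constant function by linear independence of characters on the hypercube. This is cleaner and more conceptual for the unweighted case; what the paper's heavier machinery buys is the extension to arbitrary positive weights, where one must additionally rule out failures of cycle-symmetry ($\omega(\overrightarrow{C})\neq\omega(\overrightarrow{C}^{\ast})$), a phenomenon that cannot occur when all weights equal one. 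Both arguments share the same shortest-even-cycle reduction and the same odd-cycle cancellation.
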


A similar result was obtained by Liu and Zhang \cite{lui}. They proved that
all orientations of a graph $G$ have the same permanental polynomial if and
only if $G$ has no cycles of even length.

In this work, we will extend Theorem \ref{th cavers} to arc-weighted digraphs.
Recall that a \emph{directed graph} or a \emph{digraph} $D$ is a pair $D=(V,E)$ where $V$ is a set of
\emph{vertices} and $E$ is a set of ordered pairs of vertices called
\emph{arcs}. For $u,v\in$ $V$, an arc $a=(u,v)$ of $D$ is denoted by $uv$. An
arc of the form $uu$ is called a \emph{loop} of $D$. A \emph{loopless digraph}
is one containing no loops. A \emph{symmetric} digraph is a digraph such that if $uv$ is an arc then $vu$ is also an arc. A \emph{directed cycle} of length $t>0$ in a digraph $D$  is a
subdigraph  of $D$ with vertex set $\{v_{1},v_{2},\ldots,v_{t}\}$ and arcs $v_{1}%
v_{2},\ldots,v_{t-1}v_{t},v_{t}v_{1}$. Throughout the paper, we use the term "cycle" to refer to a "directed cycle" in a digraph.
 A cycle of length $t=2$ is called a \emph{digon}. A cycle is \emph{odd} ( resp. \emph{even}) if its length is
odd (resp. even).

An \emph{arc-weighted digraph} or more simply a \emph{weighted digraph} is a
pair $(D,\omega)$ where $D$ is a digraph and $\omega$ is a \emph{arc-weight
function} that assigns\ to each arc $uv$ of $D$ a nonzero real number $\omega(uv)$,
called the \emph{weight }of the arc $uv$.
Let $(D,\omega)$ be a weighted digraph with vertices $v_{1},\ldots,v_{n}$. The
\emph{weighted adjacency matrix} of $(D,\omega)$ is defined as the $n\times n$
matrix $A(D,\omega)=[a_{ij}]$ where $a_{ij}=\omega(v_{i}v_{j})$, if
$v_{i}v_{j}$ is an arc of $D$ and $0$ otherwise.

Every  $n\times n$ real matrix $M=\left[  m_{ij}\right] $ is the  weighted adjacency matrix of 
an unique weighted digraph $(D_{M},\omega_{M})$ with vertex set $\{1,...,n\}$. This digraph is called the \emph{weighted
digraph associated} to $M$ and defined as follows:  $ij$ is an arc of $D_{M}$
iff $m_{ij}\neq0$, and the weight of an arc $ij$ is $\omega_{_{M}}%
(ij)=m_{ij}$.

In the remainder of this paper, we consider only positive weighted loopless
and symmetric digraphs (which we abbreviate to \emph{pwls-digraphs}). Let
$(D,\omega)$ be a pwls-digraphs. A \emph{skew-signing} of $(D,\omega)$ is an
arc-weight function $\omega^{\prime}$ such that $\omega^{\prime}(uv)=\pm
\omega(uv)$ and $\omega^{\prime}(uv)\omega^{\prime}(vu)<0$ for every arc $uv$
of $D$.

The main result of this paper is the following theorem.

\begin{thm}
\label{principal} Let $(D,\omega)$ be pwls-digraph and let $\omega^{\prime}$
be a skew-signing of $\left(  D,\omega \right)  $. Then, the following
statements are equivalent:

\begin{description}
\item[i)] The characteristic polynomial of $(D,\omega^{\prime})$ is the same
for any skew-signing $\omega^{\prime}$ of $(D,\omega)$.

\item[ii)] $D$ has no even cycles of length more than $2$ and $A(D,\omega
)=\Delta^{-1}S\Delta$ where $S$ is a nonnegative symmetric matrix with zero
diagonal and $\Delta$ is a diagonal matrix with positive diagonal entries.
\end{description}
\end{thm}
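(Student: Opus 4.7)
The natural tool is the combinatorial expansion
\[
\det(xI - A(D,\omega')) = \sum_{\mathcal{L}} (-1)^{c(\mathcal{L})} \omega'(\mathcal{L}) \, x^{n - |V(\mathcal{L})|},
\]
in which $\mathcal{L}$ runs over the linear subdigraphs of $D$ (vertex-disjoint unions of cycles), $c(\mathcal{L})$ is the number of cycles, and $\omega'(\mathcal{L})$ is the product of $\omega'$ over the arcs of $\mathcal{L}$. A skew-signing of $(D,\omega)$ is parameterized by a sign $t_e \in \{+1,-1\}$ for each digon $e$ of $D$: fixing a reference skew-signing $\tau$ and writing $\omega'(uv) = t_{\{u,v\}}\tau(uv)\omega(uv)$, one checks
\[
\omega'(\mathcal{L}) = \tau(\mathcal{L})\,\omega(\mathcal{L}) \prod_{e \in E^*(\mathcal{L})} t_e,
\]
where $E^*(\mathcal{L})$ is the set of edges of the underlying graph $G$ of $D$ covered by the non-digon cycles of $\mathcal{L}$ (digon edges contribute $t_e^{2}=1$). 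Each coefficient of $\det(xI-A(D,\omega'))$ is thus a multilinear polynomial in the $t_e$, and condition (i) is equivalent to the vanishing of every coefficient of a monomial $\prod_{e \in F} t_e$ with $F \neq \emptyset$.

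For (ii) $\Rightarrow$ (i), writing $A(D,\omega) = \Delta^{-1}S\Delta$ with $S$ symmetric telescopes to $\omega(C) = \omega(C^{\mathrm{rev}})$ for every cycle $C$ of $D$, and by hypothesis every non-digon cycle is odd. I group the linear subdigraphs $\mathcal{L}$ by the choice of their underlying undirected non-digon cycles and digon matching, and sum over the two orientations of each non-digon cycle $C$; this produces a factor $\omega'(C^+) + \omega'(C^-)$ which vanishes, because reversing an odd cycle flips $\omega'$ by $-\omega(C^{\mathrm{rev}})/\omega(C) = -1$. All groups containing at least one non-digon cycle contribute zero, and only linear subdigraphs built entirely of digons remain; for these $\omega'(\mathcal{L}) = (-1)^{c(\mathcal{L})}\omega(\mathcal{L})$ is independent of $\omega'$.

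For the converse, given nonempty $F \subseteq E(G)$ and $k \geq 1$, I enumerate the linear subdigraphs $\mathcal{L}$ with $E^*(\mathcal{L}) = F$ and $|V(\mathcal{L})| = k$: choose an edge-partition $F = E(C_1) \sqcup \cdots \sqcup E(C_r)$ into undirected cycles of $G$, orient each $C_i$ (two choices), and place a digon matching $M$ of $D$ on $V(D) \setminus \bigcup_i V(C_i)$ with $|M| = (k - \sum_i \ell(C_i))/2$. A direct count shows the coefficient of $\prod_{e \in F} t_e$ in the coefficient of $x^{n-k}$ factors, on each partition, as $\pm \prod_{i=1}^{r} \bigl[\omega(C_i) + (-1)^{\ell(C_i)}\omega(C_i^{\mathrm{rev}})\bigr]$ times a nonnegative matching sum. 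Specialising to $r=1$ with $F = E(C)$ and $k = \ell(C)$ collapses the matching sum to the empty matching with coefficient $1$, and (i) forces $\omega(C) + (-1)^{\ell(C)}\omega(C^{\mathrm{rev}}) = 0$; positivity of $\omega$ rules out even $\ell(C) \geq 4$, while for odd $\ell(C)$ this gives $\omega(C) = \omega(C^{\mathrm{rev}})$ (and the same identity holds trivially for digons).

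Having established that $D$ has no even cycle of length $>2$ and that $\omega(C) = \omega(C^{\mathrm{rev}})$ for every cycle of $D$, I invoke the classical cycle criterion (Engel--Schneider / Hall) characterizing matrices diagonally similar by a positive diagonal matrix to a symmetric one: this delivers $A(D,\omega) = \Delta^{-1}S\Delta$ with $S$ nonnegative symmetric and $\Delta$ positive diagonal, while $S$ inherits its zero diagonal from the looplessness of $D$. The main obstacle is the bookkeeping in the direction (i) $\Rightarrow$ (ii): showing that the single-cycle specialization truly decouples the cycle factor $\omega(C) + (-1)^{\ell(C)}\omega(C^{\mathrm{rev}})$ from the matching factor and that the latter is nonzero, which is why the case $k = \ell(C)$ (forcing $M = \emptyset$) is the right specialization to pick.
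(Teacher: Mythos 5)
Your proof is correct, and the converse direction is organized quite differently from the paper's. Both arguments rest on the same expansion of the characteristic polynomial over linear subdigraphs (the paper's Formula~(\ref{2})) and both finish by invoking the known diagonal-similarity characterization of cycle-symmetric matrices (the paper cites Maybee, Parter--Youngs and Shih--Weng rather than Engel--Schneider, but it is the same external input, and your remarks on nonnegativity and the zero diagonal of $S$ match Corollary~\ref{balanced diagonally simillar}). The real divergence is in $i)\Rightarrow ii)$. You parameterize all skew-signings simultaneously by sign variables $t_{e}\in\{\pm1\}$, observe that each coefficient $a_{k}$ is multilinear in the $t_{e}$, and use the linear independence of the monomials $\prod_{e\in F}t_{e}$ on the hypercube to reduce condition~$i)$ to the vanishing of every nonconstant coefficient; extracting the coefficient of $\prod_{e\in E(C)}t_{e}$ in $a_{\ell(C)}$ then isolates $\pm\bigl(\omega(C)+(-1)^{\ell(C)}\omega(C^{\mathrm{rev}})\bigr)$ in one stroke (your observation that $F=E(C)$ with $k=\ell(C)$ forces the empty matching and the trivial partition is the key point, and it is sound), yielding the exclusion of even cycles and the cycle-symmetry of odd cycles simultaneously. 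The paper instead compares pairs of skew-signings differing on a single edge, runs the inductive sign-flipping argument of Lemma~\ref{intermid} (and its $N^{\pm}$ variant in Section~4) to manufacture a skew-signing with $\eta^{+}(e_{1})\neq\eta^{-}(e_{1})$, and needs a shortest-bad-cycle minimality argument together with the two-stage structure (first cycle-symmetry, then Proposition~\ref{skew balanced}). Your route is shorter and avoids both the induction and the minimality; the paper's route produces the intermediate quantitative statements (Corollary~\ref{coeff}, Lemma~\ref{odd dicycle} for $(\leq l)$-cycle-symmetric digraphs) as by-products. Two cosmetic points: your formula $\omega'(uv)=t_{\{u,v\}}\tau(uv)\omega(uv)$ should have $\tau(uv)\in\{\pm1\}$ a reference sign pattern rather than a skew-signing (otherwise $\omega$ is counted twice), and the general factorization over partitions with $r\geq 2$ is stated more broadly than needed --- only the $r=1$, $M=\emptyset$ specialization is used, and that case is airtight.
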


Remark that a graph $G$ can be identified to the pwls-digraph obtained from
$G$ by replacing each edge $\left \{  u,v\right \}  $ by the two arcs $uv$ and
$vu$, both of them have weight $1$. Moreover, every orientation of $G$ can be
identified to a skew-signing of this digraph. Then, our main result is a
generalization of Theorem \ref{th cavers}.

\section{Cycle-symmetric digraphs}

We start with some formulas involving the characteristic polynomial of a
matrix and  its weighted associated digraph. For this, we need some notations
and definitions. Let $D$ be a digraph. A \emph{linear subdigraph} $L$ of $D$
is a vertex disjoint union of some cycles in $D$. A linear subdigraph $L$ of
$D$ is called \emph{even linear} if $L$ contains no odd cycle. Let
$\overrightarrow{\mathcal{L}}_{k}(D)$ (resp. $\overrightarrow{\mathcal{L}}%
_{k}^{e}(D)$) denote the set of all linear (resp. even linear) subdigraphs of
$D$  that cover precisely $k$ vertices of $D$. We usually
write this as $\overrightarrow{\mathcal{L}}_{k}$ (resp. $\overrightarrow
{\mathcal{L}}_{k}^{e}$) when no ambiguity can arise.

Let $A$ be a real matrix and let $(D,\omega)$ the weighted digraph  associated to $A$. We denote by $p_{A}(x)=\det(xI-A)=x^{n}+a_{1}%
x^{n-1}+\cdots+a_{n-1}x+a_{n}$ the characteristic polynomial of $A$.

 By using the classical definition of the determinant, we obtain the following formula:%

\begin{equation}
a_{k}=\underset{\overrightarrow{L}\in \overrightarrow{\mathcal{L}}_{k}}{\sum
}(-1)^{\left \vert \overrightarrow{L}\right \vert }\omega(\overrightarrow{L})
\label{2}%
\end{equation}
\  \  \  \  \  \  \  \  \  \  \  \  \  \  \  \  \  \

where $\left \vert \overrightarrow{L}\right \vert $ denotes the number of
cycles in $\overrightarrow{L}$ and $\omega(\overrightarrow{L})$ is the
product of all the weights of the arcs of $\overrightarrow{L}$.

In particular
\begin{equation}
\det(A)=(-1)^{n}a_{n}=(-1)^{n}\underset{\overrightarrow{L}\in \overrightarrow
{\mathcal{L}}_{n}}{\sum}(-1)^{\left \vert \overrightarrow{L}\right \vert }%
\omega(\overrightarrow{L}). \label{3}%
\end{equation}

If $A$ is skew-symmetric, then
\begin{equation}
a_{k}=\left \{
\begin{array}
[c]{c}%
0\text{ if }k\text{ is odd }\\
\underset{\overrightarrow{L}\in \overrightarrow{\mathcal{L}}_{k}^{e}}{\sum
}(-1)^{\left \vert \overrightarrow{L}\right \vert }\omega(\overrightarrow
{L})\text{ if }k\text{ is even }%
\end{array}
\right.  \label{4}%
\end{equation}

 We introduce now a special class of weighted symmetric digraphs called cycle-symmetric
digraphs. The characterization of these digraphs will be used in the proof of
our main theorem.

Given a symmetric digraph $D=(V,E)$ and a subdigraph $H=(W,F)$ of $D$, we
denote by $H^{\ast}$ the subdigraph of $D$ whose vertex set is $W$ and arc set is
$\left \{  vu:uv\in F\right \}  $. Consider now a positive arc-weight function $\omega$
of $D$ and let $q$ be a positive integer. We say that $(D,\omega)$ is $(\leq
q)$-\emph{cycle-symmetric} if $\omega(\overrightarrow{C})=\omega(\overrightarrow
{C}^{\ast})$ for every cycle $\overrightarrow{C}$ of $D$ of length at most
$q$. If $q=n$ then $(D,\omega)$ is said to be \emph{cycle-symmetric}.

We borrowed the terminology "cycle-symmetric" from  Shih and  Weng  \cite{Shih}. Following  this paper,
an $n\times n$ real matrix $\left[  a_{ij}\right]  $ is called
\emph{cycle-symmetric }  if the two conditions hold:

\begin{description}
\item[C1)] for $i\neq j\in \left \{  1,\ldots,n\right \}  $, $a_{ij}a_{ji}>0$ or
$a_{ij}=a_{ji}=0$;

\item[C2)] For any sequence of distinct integers $i_{1},\ldots,i_{k}$ from the set
$\left \{  1,\ldots,n\right \}  $, we have
\[
a_{_{i_{1}i_{2}}}a_{_{i_{2}i_{3}}}\cdots a_{_{i_{k-1}i_{k}}}a_{_{i_{k}i_{1}}%
}=a_{_{i_{1}i_{k}}}a_{_{i_{k}i_{k-1}}}\cdots a_{_{i_{3}i_{2}}}a_{_{i_{2}i_{1}%
}}%
\]

\end{description}

Obviously, a pwls-digraph  $(D,\omega)$ is always $(\leq
2)$-cycle-symmetric. Moreover, a pwls-digraph is cycle-symmetric if and only if its weighted adjacency matrix is cycle-symmetric.

The following theorem gives a characterization of cycle-symmetric matrices.
\begin{thm}
An $n\times n$ real matrix $A$ is cycle-symmetric if and only if there exists an invertible diagonal matrix $D$ such that $D^{-1}AD$ is symmetric.
\end{thm}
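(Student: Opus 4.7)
The plan is to prove the equivalence by treating the two directions separately. The sufficiency is essentially a telescoping computation, while the necessity requires constructing the diagonal matrix by a path argument on the underlying graph of $A$.

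For the sufficiency $(\Leftarrow)$, suppose $S := D^{-1}AD$ is symmetric with $D=\mathrm{diag}(d_{1},\ldots,d_{n})$ invertible. A direct computation gives $a_{ij}=(d_{i}/d_{j})\,s_{ij}$, hence $a_{ij}a_{ji}=s_{ij}^{2}\geq 0$, which yields C1 (with $a_{ij}=a_{ji}=0$ exactly when $s_{ij}=0$). For C2, the ratios telescope around any cyclic index sequence $i_{1},\ldots,i_{k}$:
\[
a_{i_{1}i_{2}}a_{i_{2}i_{3}}\cdots a_{i_{k}i_{1}}
=\prod_{\ell=1}^{k}\frac{d_{i_{\ell}}}{d_{i_{\ell+1}}}\cdot s_{i_{1}i_{2}}s_{i_{2}i_{3}}\cdots s_{i_{k}i_{1}}
=s_{i_{1}i_{2}}s_{i_{2}i_{3}}\cdots s_{i_{k}i_{1}},
\]
and likewise $a_{i_{1}i_{k}}a_{i_{k}i_{k-1}}\cdots a_{i_{2}i_{1}}=s_{i_{1}i_{k}}s_{i_{k}i_{k-1}}\cdots s_{i_{2}i_{1}}$. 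Symmetry of $S$ equates the two products factor-by-factor, giving C2.

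For the necessity $(\Rightarrow)$, I want to find positive scalars $d_{1},\ldots,d_{n}$ such that $s_{ij}:=d_{i}^{-1}a_{ij}d_{j}$ is symmetric; equivalently, $d_{i}^{2}/d_{j}^{2}=a_{ij}/a_{ji}$ whenever $a_{ij}\neq 0$ (note the ratio is positive by C1). Let $G$ be the undirected graph on $\{1,\ldots,n\}$ with an edge $\{i,j\}$ whenever $a_{ij}\neq 0$, and work one connected component at a time (isolated vertices get an arbitrary $d_{i}\neq 0$). Within a component, fix a root $i_{0}$, set $d_{i_{0}}=1$, and for any vertex $i$ connected to $i_{0}$ by a path $i_{0}=j_{0},j_{1},\ldots,j_{m}=i$ in $G$, define
\[
d_{i}^{2}:=\prod_{\ell=1}^{m}\frac{a_{j_{\ell-1}j_{\ell}}}{a_{j_{\ell}j_{\ell-1}}}.
\]
The main point to check is that $d_{i}$ does not depend on the chosen path: any two such paths close up into a closed walk through distinct vertices (decomposable into simple cycles), and for each simple cycle $j_{0},j_{1},\ldots,j_{m}=j_{0}$ condition C2 applied to $i_{1},\ldots,i_{k}=j_{0},\ldots,j_{m-1}$ gives $\prod a_{j_{\ell-1}j_{\ell}}=\prod a_{j_{\ell}j_{\ell-1}}$, i.e.\ the product of ratios around the cycle is $1$. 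Hence $d_{i}$ is well-defined, and on every edge we obtain $a_{ij}/a_{ji}=d_{i}^{2}/d_{j}^{2}$ by construction, so $D^{-1}AD$ is symmetric.

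The main obstacle is the well-definedness of $d_{i}$ in the necessity direction, which is exactly what C2 is designed to supply; reducing closed walks to simple cycles requires a small but standard induction on the walk length, and I would spell that out explicitly. The rest of the argument is telescoping and essentially bookkeeping.
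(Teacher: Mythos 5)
Your proof is correct. Note that the paper itself does not prove this theorem; it only cites \cite{maybee, Parter, Shih} for it, and your argument (telescoping for sufficiency, and for necessity the construction of $d_i^2$ as a product of the positive ratios $a_{jk}/a_{kj}$ along a path from a root, with well-definedness supplied by C2 on simple cycles) is exactly the standard Parter--Youngs-style proof found in those references. The only point you rightly flag as needing to be spelled out is the reduction of an arbitrary closed walk to simple cycles plus backtracks, each backtrack contributing the trivial factor $(a_{ij}/a_{ji})(a_{ji}/a_{ij})=1$; with that induction written down the argument is complete.
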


Different proofs of this theorem are given in \cite{maybee, Parter, Shih}.
As a consequence, we obtain the following characterization of cycle-symmetric pwls-digraphs.
\begin{cor}
\label{balanced diagonally simillar} Let $(D,\omega)$ be a pwls-digraph. Then,
the following statements are equivalent:
\begin{description}
\item[i)] $(D,\omega)$ is cycle-symmetric. 

\item[ii)] $A(D,\omega)=\Delta^{-1}S\Delta$ where $S$ is a nonnegative symmetric matrix with zero
diagonal and $\Delta$ is a diagonal matrix with positive diagonal entries.
\end{description}
\end{cor}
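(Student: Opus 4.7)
The plan is to derive the corollary directly from the preceding theorem on cycle-symmetric matrices, using the fact that for a pwls-digraph the matrix $A(D,\omega)$ automatically has zero diagonal and entries that are either strictly positive (on arcs) or zero, while condition \textbf{C1} of cycle-symmetry is immediate (since $a_{ij}$ and $a_{ji}$ are both positive on an arc and both zero otherwise). Thus $(D,\omega)$ is cycle-symmetric precisely when $A(D,\omega)$ satisfies \textbf{C2}, so by the theorem this is equivalent to the existence of an invertible diagonal $D_0$ with $D_0^{-1}A(D,\omega)D_0$ symmetric. The remaining work is to upgrade $D_0$ to a diagonal matrix with \emph{positive} entries and to verify that the resulting symmetric conjugate is nonnegative with zero diagonal.

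For the direction \textbf{(ii)}$\Rightarrow$\textbf{(i)}, I would write $a_{ij}=(\delta_j/\delta_i)s_{ij}$ from the hypothesis $A(D,\omega)=\Delta^{-1}S\Delta$, and check \textbf{C2}: for any distinct $i_1,\ldots,i_k$,
\[
a_{i_1i_2}a_{i_2i_3}\cdots a_{i_ki_1}=\Bigl(\prod_{\ell}\frac{\delta_{i_{\ell+1}}}{\delta_{i_\ell}}\Bigr)\prod_{\ell}s_{i_\ell i_{\ell+1}}=\prod_{\ell}s_{i_\ell i_{\ell+1}},
\]
since the $\delta$-factors telescope; the analogous product along the reverse cycle equals $\prod_\ell s_{i_{\ell+1}i_\ell}$, and these coincide because $S$ is symmetric. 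Condition \textbf{C1} is clear from $\delta_i>0$ and $s_{ij}\geq 0$.

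For \textbf{(i)}$\Rightarrow$\textbf{(ii)}, I would apply the preceding theorem to obtain an invertible diagonal $D_0=\mathrm{diag}(d_1,\ldots,d_n)$ with $D_0^{-1}A(D,\omega)D_0$ symmetric; equivalently $d_j^2\,a_{ij}=d_i^2\,a_{ji}$ for all $i,j$. I would then set $\Delta:=\mathrm{diag}(|d_1|,\ldots,|d_n|)$, which is invertible since each $d_i\neq 0$ and has positive diagonal entries. The identity $d_j^2 a_{ij}=d_i^2 a_{ji}$ depends only on the squares, so it rewrites as $|d_j|^2 a_{ij}=|d_i|^2 a_{ji}$, which gives $(|d_j|/|d_i|)a_{ij}=(|d_i|/|d_j|)a_{ji}$, i.e.\ $S:=\Delta^{-1}A(D,\omega)\Delta$ is symmetric. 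Its off-diagonal entries are $(|d_j|/|d_i|)\,a_{ij}\geq 0$ by positivity of $\omega$, and its diagonal vanishes because $D$ is loopless.

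I do not expect any real obstacle here: the substantive content is the cited theorem, and the only subtlety is the passage from an arbitrary diagonal conjugator to one with positive entries, which is handled in one line by taking absolute values and using that the symmetry condition is homogeneous of degree two in the $d_i$.
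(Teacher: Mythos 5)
Your proposal is correct and follows exactly the route the paper intends (the paper states the corollary without proof as a consequence of the cited theorem): reduce to the matrix characterization, note that \textbf{C1} is automatic for a pwls-digraph and \textbf{C2} is the cycle condition, and pass from an arbitrary invertible diagonal conjugator to a positive one by taking absolute values, which is legitimate since the symmetry condition $d_j^2 a_{ij}=d_i^2 a_{ji}$ depends only on the squares. The only cosmetic point is that you produce $S=\Delta^{-1}A(D,\omega)\Delta$ rather than $A(D,\omega)=\Delta^{-1}S\Delta$; replacing $\Delta$ by $\Delta^{-1}$, which still has positive diagonal entries, gives the stated form.
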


\section{Skew-signings of cycle-symmetric digraphs}

In this section, we study cycle-symmetric pwsl-digraphs $(D,\omega)$ such that the
characteristic polynomial of $(D,\omega^{\prime})$ is the same for any
skew-signing $\omega^{\prime}$ of $(D,\omega)$. More precisely, we will prove
the following Proposition.

\begin{prop}
\label{skew balanced}Let $(D,\omega)$ be a cycle-symmetric pwls-digraph. Then, the
following statements are equivalent:

\begin{description}
\item[i)] The characteristic polynomial of $(D,\omega^{\prime})$ is the same
for any skew-signing $\omega^{\prime}$ of $(D,\omega)$;

\item[ii)] $D$ contains no even cycle of length greater than $3$.
\end{description}
\end{prop}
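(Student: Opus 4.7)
The plan is to leverage the cycle-symmetry of $(D,\omega)$ to reduce the problem to the characteristic polynomial of a skew-symmetric matrix, where by formula~\eqref{4} only even linear subdigraphs contribute. By Corollary~\ref{balanced diagonally simillar}, write $A(D,\omega)=\Delta^{-1}S\Delta$ with $S$ nonnegative symmetric. Any skew-signing $\omega'$ then gives $A(D,\omega')=\Delta^{-1}S'\Delta$ with $s'_{ij}=\pm s_{ij}$; since $|s'_{ij}|=s_{ij}=s_{ji}=|s'_{ji}|$ and the skew-signing condition forces $s'_{ij}s'_{ji}<0$, one concludes $s'_{ji}=-s'_{ij}$, so $S'$ is skew-symmetric. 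By similarity, the characteristic polynomials of $A(D,\omega')$ and $S'$ coincide, and formula~\eqref{4} yields
\[
a_k(\omega')=\sum_{L\in\overrightarrow{\mathcal{L}}_k^e(D)}(-1)^{|L|}\omega'(L)\qquad(k\text{ even}),
\]
and $a_k(\omega')=0$ for odd $k$.

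For (ii)$\Rightarrow$(i): if $D$ has no even cycle of length greater than $2$, each $L\in\overrightarrow{\mathcal{L}}_k^e(D)$ is a disjoint union of digons, and every digon $\{uv,vu\}$ contributes $\omega'(uv)\omega'(vu)=-\omega(uv)\omega(vu)$, depending only on $\omega$. Hence each $a_k(\omega')$ is independent of $\omega'$.

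For (i)$\Rightarrow$(ii): suppose $D$ has an even cycle of length at least $4$, let $t$ be the minimum such length, and fix a representative $C_0$. By minimality, any $L\in\overrightarrow{\mathcal{L}}_t^e(D)$ containing a non-digon cycle must consist of a single even $t$-cycle of $D$, so
\[
a_t(\omega')=a_t^{\mathrm{digon}}-\sum_{C\text{ even $t$-cycle of }D}\omega'(C),
\]
with the first summand independent of $\omega'$. Parameterize skew-signings by $\sigma\colon E^{\mathrm{und}}\to\{\pm1\}$ (where $E^{\mathrm{und}}$ is the set of undirected edges of $D$) by fixing a reference arc at each edge: then $\omega'(C)=\varepsilon(C)\omega(C)\prod_{e\in C}\sigma(e)$ for some $\varepsilon(C)\in\{\pm1\}$, where the product is over the undirected edges traversed by $C$. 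Cycle-symmetry and $t$ even give $\omega'(C^{*})=\omega'(C)$, whence
\[
\sum_C\omega'(C)=2\sum_{\{C,C^{*}\}}\varepsilon(C)\omega(C)\prod_{e\in C}\sigma(e).
\]
Invariance of $a_t(\omega')$ under every skew-signing forces this multilinear expression to be constant on $\{\pm1\}^{E^{\mathrm{und}}}$; by Walsh--Fourier expansion, all non-constant monomial coefficients must vanish. But the monomial $\prod_{e\in C_0}\sigma(e)$ receives contributions only from the pair $\{C_0,C_0^{*}\}$, as the undirected edges of $C_0$ form a $t$-cycle admitting exactly two orientations, so its coefficient equals $\varepsilon(C_0)\omega(C_0)\neq 0$, a contradiction.

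The hardest step is the concluding Walsh argument: it combines the elementary fact that a multilinear polynomial on $\{\pm1\}^n$ which is constant must have all non-constant Fourier coefficients zero, with the combinatorial observation that the minimum-length even cycle $C_0$ is uniquely determined (up to reversal) by its underlying undirected edge set, so no other pair can cancel the leading monomial.
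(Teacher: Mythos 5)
Your proof is correct, and the interesting direction, i)\,$\Rightarrow$\,ii), follows a genuinely different route from the paper's. The paper never invokes the diagonal similarity inside this proposition: it works with the auxiliary weights $\overline{\omega}(uv)=\sqrt{\omega(uv)\omega(vu)}$ and $\widehat{\omega'}$ (properties \textbf{P1}--\textbf{P3}) to reach the skew-symmetric reduction of formula~(\ref{4}), and then proves the existence of a ``discriminating'' skew-signing by an inductive sign-flipping argument (Lemma~\ref{intermid}): assuming $\eta^{+}_{\omega'}(e_1)=\eta^{-}_{\omega'}(e_1)$ for all $\omega'$, it propagates the equality to $\eta^{\pm}_{\omega'}(e_1,\ldots,e_q)$ by flipping one arc-pair at a time, reaching a contradiction, and then flips $e_1$ to change the coefficient $b_q$. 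You instead parameterize the whole space of skew-signings as the hypercube $\{\pm1\}^{E^{\mathrm{und}}}$ and observe that $a_t(\omega')$ is, up to a constant, a multilinear polynomial in $\sigma$ whose Walsh coefficient at the edge set of $C_0$ equals $2\varepsilon(C_0)\omega(C_0)\neq0$ (the pairing of $C$ with $C^{*}$, using cycle-symmetry and $t$ even, is handled correctly, as is the minimality argument showing only digon-unions and single $t$-cycles occur in $\overrightarrow{\mathcal{L}}_t^{e}$). Your approach buys a one-shot, non-inductive identification of exactly which coefficient obstructs invariance; the paper's local argument buys the stronger Lemma~\ref{odd dicycle}, which only assumes $(\leq l)$-cycle-symmetry and is reused in the proof of the Main Theorem where full cycle-symmetry is not yet available. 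One small point you should make explicit: applying formula~(\ref{4}) to $S'$ gives the sum of the $S'$-weights of even linear subdigraphs, and you need the (easy, but unstated) telescoping of the diagonal entries of $\Delta$ around each cycle to replace these by $\omega'(\overrightarrow{L})$; this is precisely what the paper's property \textbf{P3} records.
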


Let $(D,\omega)$ is an arbitrary pwls-digraph and let $\omega^{\prime}$ be a
skew-signing $\omega^{\prime}$ of $(D,\omega)$. We consider the two arc-weight
functions defined as follows:  $\overline{\omega}(uv)=\sqrt{\omega
(uv)\omega(vu)}$ and $\widehat{\omega^{\prime}}(uv)=\frac{\omega^{\prime}%
(uv)}{\omega(uv)}\sqrt{\omega(uv)\omega(vu)}$ for every arc $uv$ of $D$.  We
have the following properties:

\begin{description}
\item[P1] The weighted adjacency matrix of  $(D,\overline{\omega})$ is 
symmetric.

\item[P2] $\widehat{\omega^{\prime}}$ is a skew-signing of $(D,\overline
{\omega})$ and $A(D,\widehat{\omega^{\prime}})$ is a skew-symmetric matrix.

\item[P3] If $(D,\omega)$ is a $(\leq q)$-cycle-symmetric digraph, then for every
dicycle $\overrightarrow{C}$ of $D$ with length $\leq$ $q$ we have
$\omega(\overrightarrow{C})=\overline{\omega}(\overrightarrow{C})$ and $\omega^{\prime}(\overrightarrow{C})=\widehat{\omega^{\prime}}(\overrightarrow
{C})$.
\end{description}

 We denote by $p_{(D,\omega)}(x):=x^{n}+a_{1}x^{n-1}%
+\cdots+a_{n-1}x+a_{n}$ the charateristic\ polynomial of $(D,\omega)$. The
charateristic\ polynomials of $(D,\omega^{\prime})$ and $(D,\widehat{\omega^{\prime}})$ are respectively denoted by
$p_{(D,\omega^{\prime})}(x):=x^{n}+b_{1}x^{n-1}+\cdots+b_{n-1}x+b_{n}$ and
$p_{(D,\widehat{\omega^{\prime}})}(x):=x^{n}%
+c_{1}x^{n-1}+\cdots+c_{n-1}x+c_{n}$.

From Formula (\ref{2}), we have $b_{1}=0$ and $b_{2}=-a_{2}$. In particular,
$b_{1}$ and $b_{2}$ are independent of $\omega^{\prime}$.

\begin{lem}
\label{coeff a balanced}If $(D,\omega)$ is $(\leq q)$-cycle-symmetric, then:%
\[%
\begin{array}
[c]{cc}%
b_{k}& =\left \{
\begin{array}
[c]{cc}%
0 & \text{if }k\text{ is odd}\\
\underset{\overrightarrow{L}\in \overrightarrow{\mathcal{L}}_{k}^{e}}{\sum
}(-1)^{\left \vert \overrightarrow{L}\right \vert }\omega^{\prime}%
(\overrightarrow{L}) & \text{if }k\text{ is even}%
\end{array}
\right.
\end{array}
\]
for $k=$ $1,\ldots,q$.
\end{lem}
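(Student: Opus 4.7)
The plan is to pass through the skew-symmetric matrix $A(D,\widehat{\omega^{\prime}})$. By Formula (\ref{2}) applied to $A(D,\omega^{\prime})$ we have
\[
b_{k}=\sum_{\overrightarrow{L}\in\overrightarrow{\mathcal{L}}_{k}}(-1)^{|\overrightarrow{L}|}\omega^{\prime}(\overrightarrow{L}),
\]
and by Formula (\ref{2}) applied to $A(D,\widehat{\omega^{\prime}})$,
\[
c_{k}=\sum_{\overrightarrow{L}\in\overrightarrow{\mathcal{L}}_{k}}(-1)^{|\overrightarrow{L}|}\widehat{\omega^{\prime}}(\overrightarrow{L}).
\]
I will first show $b_{k}=c_{k}$ for $k\leq q$, and then invoke Formula (\ref{4}) for $c_{k}$, which is available because $A(D,\widehat{\omega^{\prime}})$ is skew-symmetric by property \textbf{P2}.

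To identify the two sums, fix $k\leq q$ and a linear subdigraph $\overrightarrow{L}\in\overrightarrow{\mathcal{L}}_{k}$. Every cycle $\overrightarrow{C}$ appearing in $\overrightarrow{L}$ has length at most $k\leq q$, so property \textbf{P3} gives $\omega^{\prime}(\overrightarrow{C})=\widehat{\omega^{\prime}}(\overrightarrow{C})$. Since $\omega^{\prime}(\overrightarrow{L})$ and $\widehat{\omega^{\prime}}(\overrightarrow{L})$ are the products of these cycle weights, we conclude $\omega^{\prime}(\overrightarrow{L})=\widehat{\omega^{\prime}}(\overrightarrow{L})$. Summing over $\overrightarrow{\mathcal{L}}_{k}$ yields $b_{k}=c_{k}$.

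Now, by \textbf{P2} and Formula (\ref{4}), $c_{k}=0$ when $k$ is odd, and
\[
c_{k}=\sum_{\overrightarrow{L}\in\overrightarrow{\mathcal{L}}_{k}^{e}}(-1)^{|\overrightarrow{L}|}\widehat{\omega^{\prime}}(\overrightarrow{L})
\]
when $k$ is even. Applying once more the equality $\widehat{\omega^{\prime}}(\overrightarrow{L})=\omega^{\prime}(\overrightarrow{L})$ (valid for every even linear subdigraph of size $k\leq q$, by the same cycle-by-cycle argument) converts this into the stated formula for $b_{k}$.

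No step is genuinely hard; the only point that deserves care is the cycle-by-cycle use of \textbf{P3} for linear subdigraphs — the bound $k\leq q$ is precisely what guarantees that each constituent cycle is short enough for cycle-symmetry to apply, and this is where the restriction $k=1,\ldots,q$ in the statement comes from.
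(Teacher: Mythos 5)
Your proof is correct and follows essentially the same route as the paper: establish $b_k=c_k$ for $k\le q$ via \textbf{P3} applied cycle-by-cycle to each linear subdigraph, then invoke Formula (\ref{4}) for the skew-symmetric matrix $A(D,\widehat{\omega^{\prime}})$ and convert back. Your explicit justification that each constituent cycle of an $\overrightarrow{L}\in\overrightarrow{\mathcal{L}}_{k}$ has length at most $k\le q$ is a welcome elaboration of a step the paper leaves implicit.
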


\begin{proof}
Let $k\in$ $\left \{  1,\ldots,q\right \}  $. It follows from \textbf{P3} that
$\omega^{\prime}(\overrightarrow{L})=\widehat{\omega^{\prime}}(\overrightarrow
{L})$ for every $\overrightarrow{L}\in \overrightarrow{\mathcal{L}}_{k}$ and
hence by using Formula (\ref{2}), we have, $b_{k}=c_{k}$. Moreover, $A(D,\widehat
{\omega^{\prime}})$ is a skew-symmetric matrix then by formula (\ref{4}):
\[%
\begin{array}
[c]{cc}%
c_{k} & =\left \{
\begin{array}
[c]{cc}%
0 & \text{if }k\text{ is odd}\\
\underset{\overrightarrow{L}\in \overrightarrow{\mathcal{L}}_{k}^{e}}{\sum
}(-1)^{\left \vert \overrightarrow{L}\right \vert }\widehat{\omega^{\prime}%
}(\overrightarrow{L}) & \text{if }k\text{ is even}%
\end{array}
\right.
\end{array}
\]

Now, by applying again \textbf{P2}, we obtain

$%
\begin{array}
[c]{cc}%
b_{k}=c_{k} & =\left \{
\begin{array}
[c]{cc}%
0 & \text{if }k\text{ is odd}\\
\underset{\overrightarrow{L}\in \overrightarrow{\mathcal{L}}_{k}^{e}}{\sum
}(-1)^{\left \vert \overrightarrow{L}\right \vert }\omega^{\prime}%
(\overrightarrow{L}) & \text{if }k\text{ is even}%
\end{array}
\right.
\end{array}
$
\end{proof}

We denote by $\overrightarrow{\mathcal{C}}_{k}$ the set of cycles of length
$k$ of $D$. For a skew-signing $\omega^{\prime}$, this set is partitioned
into two subsets: $\overrightarrow{\mathcal{C}}_{k,\omega^{\prime}}^{+}$ and
$\overrightarrow{\mathcal{C}}_{k,\omega^{\prime}}^{-}$ where $\overrightarrow
{\mathcal{C}}_{k,\omega^{\prime}}^{+}$ (resp. $\overrightarrow{\mathcal{C}%
}_{k,\omega^{\prime}}^{-}$) is the set of cycles $\overrightarrow{C}$ such
that $\omega^{\prime}(\overrightarrow{C})>0$ (resp. $\omega^{\prime
}(\overrightarrow{C})<0$). For $k$ even, let $\overrightarrow{\mathcal{D}}_{k}$ denote the set
of all collections $\overrightarrow{L}$ of vertex disjoint digons that cover
precisely $k$ vertices in $D$.

\begin{cor}
\label{coeff}
\label{coeff}Let $q\geq4$. Assume that $(D,\omega)$ is $(\leq q-1)$-cycle-symmetric
and contains no even cycles of length $k\in$ $\left \{  3,\ldots,q-1\right \}
$ then
\[
b_{k}=\left \{
\begin{array}
[c]{ll}%
\underset{\overrightarrow{L}\in \overrightarrow{\mathcal{D}}_{k}}{\sum}%
\omega(\overrightarrow{L}) & \text{if }k\text{ is even}\\
0 & \text{if }k\text{ is odd}%
\end{array}
\right.
\]

for $k=1,\ldots,q-1$ and
\[
b_{q}=\left \{
\begin{array}
[c]{ll}%
-\underset{\overrightarrow{C}\in \overrightarrow{\mathcal{C}}_{q,\omega
^{\prime}}^{+}}{\sum}\omega(\overrightarrow{C})+\underset{\overrightarrow
{C}\in \overrightarrow{\mathcal{C}}_{q,\omega^{\prime}}^{-}}{\sum}%
\omega(\overrightarrow{C})+\underset{\overrightarrow{L}\in \overrightarrow
{\mathcal{D}}_{q}}{\sum}\omega(\overrightarrow{L}) & \text{if }q\text{ is
even}\\
-\underset{\overrightarrow{C}\in \overrightarrow{\mathcal{C}}_{q,\omega
^{\prime}}^{+}}{\sum}\omega(\overrightarrow{C})+\underset{\overrightarrow
{C}\in \overrightarrow{\mathcal{C}}_{q,\omega^{\prime}}^{-}}{\sum}%
\omega(\overrightarrow{C}) & \text{if }q\text{ is odd}%
\end{array}
\right.
\]

\end{cor}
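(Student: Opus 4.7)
The plan naturally splits into the range $k\le q-1$, where Lemma \ref{coeff a balanced} does the work, and the boundary coefficient $b_q$, which must be computed directly. For $k\in\{1,\ldots,q-1\}$ the hypothesis that $(D,\omega)$ is $(\le q-1)$-cycle-symmetric is exactly what Lemma \ref{coeff a balanced} asks for, so I would start from its conclusion: $b_k=0$ when $k$ is odd, and $b_k=\sum_{\overrightarrow{L}\in\overrightarrow{\mathcal{L}}_k^e}(-1)^{|\overrightarrow{L}|}\omega^{\prime}(\overrightarrow{L})$ when $k$ is even. The assumption that $D$ has no even cycles of length between $3$ and $q-1$ forces every even linear subdigraph of size $k\le q-1$ to be a disjoint union of $k/2$ digons, i.e.\ an element of $\overrightarrow{\mathcal{D}}_k$. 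For each such $\overrightarrow{L}$, the skew-signing property $\omega^{\prime}(uv)\omega^{\prime}(vu)<0$ (combined with $\omega^{\prime}(uv)=\pm\omega(uv)$) gives $\omega^{\prime}(\overrightarrow{L})=(-1)^{k/2}\omega(\overrightarrow{L})$, so $(-1)^{|\overrightarrow{L}|}\omega^{\prime}(\overrightarrow{L})=\omega(\overrightarrow{L})$, which is the announced formula.

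For $b_q$, since $(\le q-1)$-cycle-symmetry is \emph{not} enough to invoke Lemma \ref{coeff a balanced} at degree $q$, I would use Formula (\ref{2}) directly and partition $\overrightarrow{\mathcal{L}}_q$ into three classes: (A) the single $q$-cycles $\overrightarrow{\mathcal{C}}_q$; (B) the all-digon covers $\overrightarrow{\mathcal{D}}_q$ (nonempty only if $q$ is even); (C) the remaining $\overrightarrow{L}$, which have at least two cycles and at least one non-digon cycle. For any $\overrightarrow{L}\in(C)$ every cycle has length $\le q-2$, so the hypothesis that $D$ has no even cycle of length in $\{3,\ldots,q-1\}$ forces each non-digon cycle of $\overrightarrow{L}$ to be odd of length at most $q-1$; in particular $\overrightarrow{L}$ contains at least one odd cycle. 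The contribution of (A) is $-\sum_{\overrightarrow{C}\in\overrightarrow{\mathcal{C}}_q}\omega^{\prime}(\overrightarrow{C})$, which immediately splits into the two $\overrightarrow{\mathcal{C}}_{q,\omega^{\prime}}^{\pm}$ terms of the statement, and the digon computation of the first paragraph shows that (B) contributes $\sum_{\overrightarrow{L}\in\overrightarrow{\mathcal{D}}_q}\omega(\overrightarrow{L})$.

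The main obstacle, and the heart of the argument, is to show that class (C) contributes $0$; I plan to do this by a sign-reversing involution. For each $\overrightarrow{L}\in(C)$, let $\overrightarrow{C}$ be the odd cycle of $\overrightarrow{L}$ whose vertex set (common to $\overrightarrow{C}$ and $\overrightarrow{C}^{\ast}$) has the smallest minimum vertex, and set $\iota(\overrightarrow{L}):=(\overrightarrow{L}\setminus\overrightarrow{C})\cup\overrightarrow{C}^{\ast}$. Then $\iota$ is a fixed-point-free involution on (C) (a cycle of length $\ge 3$ is never equal to its reverse, and the selection rule depends only on the vertex set of the chosen cycle). Since $\overrightarrow{C}$ has odd length $t\le q-1$, $(\le q-1)$-cycle-symmetry gives $\omega(\overrightarrow{C})=\omega(\overrightarrow{C}^{\ast})$; applying $\omega^{\prime}(uv)\omega^{\prime}(vu)=-\omega(uv)\omega(vu)$ to the $t$ arcs of $\overrightarrow{C}$ yields $\omega^{\prime}(\overrightarrow{C})\omega^{\prime}(\overrightarrow{C}^{\ast})=(-1)^t\omega(\overrightarrow{C})\omega(\overrightarrow{C}^{\ast})=-\omega(\overrightarrow{C})^2$, and combined with $|\omega^{\prime}(\overrightarrow{C})|=|\omega^{\prime}(\overrightarrow{C}^{\ast})|=\omega(\overrightarrow{C})$ this forces $\omega^{\prime}(\overrightarrow{C}^{\ast})=-\omega^{\prime}(\overrightarrow{C})$. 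Since $|\iota(\overrightarrow{L})|=|\overrightarrow{L}|$, the two terms $(-1)^{|\overrightarrow{L}|}\omega^{\prime}(\overrightarrow{L})$ and $(-1)^{|\iota(\overrightarrow{L})|}\omega^{\prime}(\iota(\overrightarrow{L}))$ cancel, so (C) contributes $0$ and the stated expression for $b_q$ follows by summing (A), (B), and (C).
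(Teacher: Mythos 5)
Your proof is correct and follows essentially the same route as the paper: Lemma \ref{coeff a balanced} plus the observation that $\overrightarrow{\mathcal{L}}_k^{e}=\overrightarrow{\mathcal{D}}_k$ for $k\le q-1$, and for $b_q$ the same three-way split of $\overrightarrow{\mathcal{L}}_q$ with the remaining terms cancelled by reversing an odd cycle, whose length is at most $q-2$ so that $(\le q-1)$-cycle-symmetry applies. Your version is in fact slightly tighter than the paper's, since you make the cancellation a genuine fixed-point-free involution via a canonical choice of odd cycle and you explicitly keep the single $q$-cycles out of the cancelling class, where the paper's notation $\overrightarrow{\mathcal{L}}_q\setminus\overrightarrow{\mathcal{L}}_q^{e}$ is loose when $q$ is odd.
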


\begin{proof}
The first equality follows from Lemma \ref{coeff a balanced}.

From formula (\ref{2}), we have%
\[%
\begin{array}
[c]{cl}%
b_{q} & =\underset{\overrightarrow{L}\in \overrightarrow{\mathcal{L}}_{q}}%
{\sum}(-1)^{\left \vert \overrightarrow{L}\right \vert }\omega^{\prime
}(\overrightarrow{L})\\
& =\underset{\overrightarrow{L}\in \overrightarrow{\mathcal{L}}_{q}%
\setminus \overrightarrow{\mathcal{L}}_{q}^{e}}{\sum}(-1)^{\left \vert
\overrightarrow{L}\right \vert }\omega^{\prime}(\overrightarrow{L}%
)+\underset{\overrightarrow{L}\in \overrightarrow{\mathcal{D}}_{q}}{\sum
}(-1)^{\left \vert \overrightarrow{L}\right \vert }\omega^{\prime}%
(\overrightarrow{L})-\underset{\overrightarrow{C}\in \overrightarrow
{\mathcal{C}}_{q}}{\sum}\omega^{\prime}(\overrightarrow{C})
\end{array}
\]

By definition of $\overrightarrow{\mathcal{C}}_{q,\omega^{\prime}}^{+}$ and
$\overrightarrow{\mathcal{C}}_{q,\omega^{\prime}}^{-}$, we have $\underset
{\overrightarrow{C}\in \overrightarrow{\mathcal{C}}_{q}}{\sum}\omega^{\prime
}(\overrightarrow{C})=\underset{\overrightarrow{C}\in \overrightarrow
{\mathcal{C}}_{q,\omega^{\prime}}^{+}}{\sum}\omega(\overrightarrow
{C})-\underset{\overrightarrow{C}\in \overrightarrow{\mathcal{C}}%
_{q,\omega^{\prime}}^{-}}{\sum}\omega(\overrightarrow{C})$.

Consider now $\overrightarrow{L}\in \overrightarrow{\mathcal{L}}_{q}%
\setminus \overrightarrow{\mathcal{L}}_{q}^{e}$. By definition of
$\overrightarrow{\mathcal{L}}_{q}$ and $\overrightarrow{\mathcal{L}}_{q}^{e}$,
the linear subdigraph $\overrightarrow{L}$ contains an odd cycle
$\overrightarrow{C}$ among its components. Let $\overrightarrow{L}^{\prime}$
the linear subdigraph obtained from $\overrightarrow{L}$ by replacing the
cycle $\overrightarrow{C}$ by $\overrightarrow{C}^{\ast}$. Since
$\overrightarrow{C}$ is odd and $\omega(\overrightarrow{C})=\omega
(\overrightarrow{C}^{\ast})$, $\omega^{\prime}(\overrightarrow{L}%
)=-\omega^{\prime}(\overrightarrow{L}^{\prime})$. Thus, linear subdigraphs of
$\overrightarrow{\mathcal{L}}_{q}\setminus \overrightarrow{\mathcal{L}}_{q}%
^{e}$ contribute $0$ to $b_{p}$. Now, according to the parity of $p$, we
have$\underset{\overrightarrow{L}\in \overrightarrow{\mathcal{D}}_{p}}{\sum
}(-1)^{\left \vert \overrightarrow{L}\right \vert }\omega^{\prime}%
(\overrightarrow{L})=\underset{\overrightarrow{L}\in \overrightarrow
{\mathcal{D}}_{p}}{\sum}(-1)^{p/2}(-1)^{p/2}\omega(\overrightarrow
{L})=\underset{\overrightarrow{L}\in \overrightarrow{\mathcal{D}}_{p}}{\sum
}\omega(\overrightarrow{L})$\ if\ $p$ is even and $0$ if\ $p$ is odd, which
yields the second equality in the Corollary.
\end{proof}

It follows that if $(D,\omega)$ is cycle-symmetric and contains no even cycles of
length greater than $2$ then the characteristic polynomial of $(D,\omega
^{\prime})$ is the same for any skew-signing $\omega^{\prime}$ of $(D,\omega
)$. This proves the implication $ii)\Rightarrow i)$ of Proposition
\ref{skew balanced}. The proof of $i)$ implies $ii)$ is a direct consequence
of the following more general result.\

\begin{lem}
\label{odd dicycle} Let $(D,\omega)$ be a $(\leq l)$-cycle-symmetric pwsl-digraph
where $l\geqslant3$. If the characteristic polynomial of $(D,\omega^{\prime})$
is the same for any skew-signing $\omega^{\prime}$ of $(D,\omega)$, then every
cycle of length at most $l$ is an odd cycle or a digon.
\end{lem}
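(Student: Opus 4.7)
The plan is to argue by contrapositive: assuming that $D$ contains an even cycle of length between $4$ and $l$, I would exhibit two skew-signings of $(D,\omega)$ whose weighted adjacency matrices have different characteristic polynomials. Let $k$ be the minimum length of such a cycle; by minimality no even cycles of length in $\{3,\ldots,k-1\}$ exist in $D$. Since $(D,\omega)$ is in particular $(\leq k-1)$-cycle-symmetric, Corollary~\ref{coeff} with $q=k$ gives
\[
b_k \;=\; \sum_{\overrightarrow{L}\in\overrightarrow{\mathcal{D}}_k}\omega(\overrightarrow{L}) \;-\; \sum_{\overrightarrow{C}\in\overrightarrow{\mathcal{C}}_k}\mathrm{sgn}\bigl(\omega'(\overrightarrow{C})\bigr)\,\omega(\overrightarrow{C}).
\]
The first sum is independent of $\omega'$, so invariance of $b_k$ under skew-signings reduces to the claim that $T(\omega') := -\sum_{\overrightarrow{C}\in\overrightarrow{\mathcal{C}}_k}\mathrm{sgn}(\omega'(\overrightarrow{C}))\,\omega(\overrightarrow{C})$ does not vary with $\omega'$, and I would derive a contradiction from this.

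Next I would parametrize the set of skew-signings as $\{\pm 1\}^{E(G)}$, where $G$ is the underlying simple graph of $D$: fix a reference arc in each digon and let $x_e\in\{\pm 1\}$ record whether $\omega'$ assigns positive weight to the reference arc of the edge $e$. An arc-by-arc computation then shows that for every $\overrightarrow{C}\in\overrightarrow{\mathcal{C}}_k$,
\[
\mathrm{sgn}\bigl(\omega'(\overrightarrow{C})\bigr) \;=\; \tau(\overrightarrow{C}) \prod_{e\in E(\overrightarrow{C})} x_e,
\]
where $\tau(\overrightarrow{C})\in\{\pm 1\}$ records the parity of the number of arcs of $\overrightarrow{C}$ opposite to their reference direction and does not depend on $\omega'$. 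Substituting this formula realizes $T$ as a multilinear polynomial in the variables $(x_e)_{e\in E(G)}$.

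The final step is to read off the coefficients of this polynomial and show it is non-constant. Since $D$ is symmetric, every $k$-cycle $\bar C$ in $G$ lifts to exactly two cycles $\overrightarrow{C},\overrightarrow{C}^*\in\overrightarrow{\mathcal{C}}_k$, which share the edge set $E(\bar C)$ and therefore contribute to the same monomial $\prod_{e\in E(\bar C)} x_e$. Because $k$ is even, a direct check yields $\tau(\overrightarrow{C}^*) = \tau(\overrightarrow{C})$, and $(\leq l)$-cycle-symmetry gives $\omega(\overrightarrow{C}^*) = \omega(\overrightarrow{C})$, so the two contributions reinforce rather than cancel and the coefficient of $\prod_{e\in E(\bar C)}x_e$ in $T$ equals $-2\tau(\overrightarrow{C})\omega(\overrightarrow{C})$, which is nonzero since $\omega$ is positive. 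Distinct $k$-cycles of $G$ give distinct monomials, and the multilinear monomials are linearly independent as functions on $\{\pm 1\}^{E(G)}$, so $T$ cannot be constant, contradicting the hypothesis. The principal obstacle I anticipate is the sign bookkeeping that yields $\tau(\overrightarrow{C}^*) = \tau(\overrightarrow{C})$; this is precisely where the hypothesis that $k$ is even is used, ensuring that the forward and reverse traversals of each cycle in $G$ reinforce each other in $T$ rather than annihilating.
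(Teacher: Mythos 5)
Your proof is correct, but it follows a genuinely different route from the paper's. The paper fixes the (minimal) even cycle, proves the auxiliary Lemma \ref{intermid} by an induction that flips one arc of the cycle at a time to produce a skew-signing $\omega_{0}^{\prime}$ with $\eta_{\omega_{0}^{\prime}}^{+}(e_{1})\neq\eta_{\omega_{0}^{\prime}}^{-}(e_{1})$, and then exhibits a second skew-signing $\omega^{\prime\prime}$ (obtained by negating $e_{1}$ and $e_{1}^{\ast}$) for which $b_{q}-c_{q}=-4(\eta_{\omega_{0}^{\prime}}^{+}(e_{1})-\eta_{\omega_{0}^{\prime}}^{-}(e_{1}))\neq0$. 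You instead treat the coefficient $b_{k}$ globally as a function on the cube $\{\pm1\}^{E(G)}$ of skew-signings: after the same reduction via Corollary \ref{coeff}, you expand $T(\omega^{\prime})$ in the Fourier--Walsh basis and observe that each undirected $k$-cycle contributes the monomial $\prod_{e\in E(\bar C)}x_{e}$ with coefficient $-\bigl(\tau(\overrightarrow{C})\omega(\overrightarrow{C})+\tau(\overrightarrow{C}^{\ast})\omega(\overrightarrow{C}^{\ast})\bigr)$, which is nonzero because $k$ even forces $\tau(\overrightarrow{C}^{\ast})=(-1)^{k}\tau(\overrightarrow{C})=\tau(\overrightarrow{C})$ and the weights are positive; linear independence of multilinear monomials then makes $T$ non-constant. (All your checks go through: the $k$ arcs of a $k$-cycle lie in $k$ distinct edges since $k\geq3$, so the monomial is well defined, and the hypotheses of Corollary \ref{coeff} hold by minimality of $k$.) Your argument is self-contained and arguably cleaner --- it dispenses with the delicate induction of Lemma \ref{intermid} and isolates exactly where evenness of $k$ enters; note also that positivity of $\omega$ alone already prevents cancellation between $\overrightarrow{C}$ and $\overrightarrow{C}^{\ast}$, so you use length-$k$ cycle-symmetry only cosmetically. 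What the paper's machinery buys is reuse: the $\eta^{\pm}$ bookkeeping and the induction of Lemma \ref{intermid} are deployed again verbatim in the final lemma of Section 4, whereas your method would have to be adapted separately there.
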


Before proving this Lemma, we introduce some notations and establish an
intermediate result. Let $(D,\omega)$ be a an arbitrary pwsl-digraph and
consider an arbitrary cycle of $D$ of length $q\geqslant3$ whose vertices
are $v_{1},\ldots,v_{q}$ and whose arcs are $e_{1}:=v_{1}v_{2},\ldots
,e_{q-1}:=v_{q-1}v_{q}$, $e_{q}:=v_{q}v_{1}$. Let $\omega^{\prime}$ be a skew-
signing of\textbf{ }$(D,\omega)$\textbf{. }For $h\in \{1,...,q\}$ and
$r\in \{1,...,h\}$, we denote by $\eta_{\omega^{\prime}}^{+}(e_{1},\ldots
,e_{r},\overline{e_{r+1}},\ldots,\overline{e_{h}})$ the sum of the weights of
cycles $\overrightarrow{C}$ of length $q$ in $D$ that have $\omega^{\prime
}(\overrightarrow{C})>0$ and contain arcs $e_{1},\ldots,e_{r}$ but not arcs
$e_{r+1},\ldots,e_{h}$,. Define $\eta_{\omega^{\prime}}^{-}(e_{1},\ldots
,e_{r},\overline{e_{r+1}},\ldots,\overline{e_{h}})$ analogously.

\begin{lem}
\label{intermid}There exists a skew-signing $\omega_{0}^{\prime}$ of
$(D,\omega)$ such that $\eta_{\omega_{0}^{\prime}}^{+}(e_{1})\neq \eta
_{\omega_{0}^{\prime}}^{-}(e_{1})$.
\end{lem}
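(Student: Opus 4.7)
The plan is to argue by contradiction: suppose that $\eta_{\omega'}^{+}(e_1)=\eta_{\omega'}^{-}(e_1)$ for \emph{every} skew-signing $\omega'$ of $(D,\omega)$, and drive this to a contradiction by a descent along the arcs $e_1,e_2,\ldots,e_{q-1}$ of the distinguished cycle.

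The key construction is a \emph{flip} operation. Given any skew-signing $\omega'$ and any index $i$, let $\omega_i'$ denote the arc-weight function obtained from $\omega'$ by reversing the signs on both $e_i$ and its reverse $e_i^{\ast}$; this is again a skew-signing of $(D,\omega)$. Since $q\geq 3$, a $q$-cycle cannot contain both $e_i$ and $e_i^{\ast}$, so for every cycle $\overrightarrow{C}$ of length $q$ one has $\omega_i'(\overrightarrow{C})=-\omega'(\overrightarrow{C})$ whenever $\overrightarrow{C}$ contains $e_i$ or $e_i^{\ast}$, and $\omega_i'(\overrightarrow{C})=\omega'(\overrightarrow{C})$ otherwise.

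The main step is to show by induction on $r\in\{1,\ldots,q-1\}$ that, under the contradiction hypothesis, $\eta_{\omega'}^{+}(e_1,\ldots,e_r)=\eta_{\omega'}^{-}(e_1,\ldots,e_r)$ for every skew-signing $\omega'$. The base case $r=1$ is the hypothesis. For the inductive step I would split
\[
\eta_{\omega'}^{\pm}(e_1,\ldots,e_{r-1})=\eta_{\omega'}^{\pm}(e_1,\ldots,e_r)+\eta_{\omega'}^{\pm}(e_1,\ldots,e_{r-1},\overline{e_r})
\]
and apply the inductive hypothesis twice, once to $\omega'$ and once to $\omega_r'$. The crux is a structural observation: any $q$-cycle containing $e_1,\ldots,e_{r-1}$ but not $e_r$ also fails to contain $e_r^{\ast}=v_{r+1}v_r$, because such a cycle already enters $v_r$ via $e_{r-1}$ and cycles visit each vertex only once. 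Therefore the flip $\omega'\mapsto\omega_r'$ toggles the sign of exactly those cycles counted in $\eta^{\pm}(e_1,\ldots,e_r)$ and leaves $\eta^{\pm}(e_1,\ldots,e_{r-1},\overline{e_r})$ untouched, so the two instances of the inductive hypothesis form a $2\times 2$ linear system whose only solution forces $\eta_{\omega'}^{+}(e_1,\ldots,e_r)=\eta_{\omega'}^{-}(e_1,\ldots,e_r)$ (and, as a bonus, the analogous equality on the $\overline{e_r}$-class).

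Taking $r=q-1$ then yields the contradiction. The arcs $e_1,\ldots,e_{q-1}$ already trace the directed path $v_1\to v_2\to\cdots\to v_q$, so the only $q$-cycle containing all of them is $\overrightarrow{C}$ itself, closed by $e_q=v_qv_1$. Consequently $\eta_{\omega'}^{+}(e_1,\ldots,e_{q-1})-\eta_{\omega'}^{-}(e_1,\ldots,e_{q-1})=\pm\omega(\overrightarrow{C})\neq 0$ since $\omega$ is positive, contradicting the outcome of the induction. I expect the main technical care to lie in the bookkeeping around the flip operation, specifically the justification that at each inductive step $\omega'\mapsto\omega_r'$ alters only the cycles containing $e_r$ and no cycle still contributing to the $\overline{e_r}$-class, so that the two linear relations decouple cleanly.
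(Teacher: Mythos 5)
Your argument is correct and is essentially the paper's own proof: the same induction on the prefix $e_1,\ldots,e_r$, using the flip of $\{e_r,e_r^{\ast}\}$ to produce a second linear relation that forces $\eta^{+}(e_1,\ldots,e_r)=\eta^{-}(e_1,\ldots,e_r)$ (you merely make explicit the fact, used implicitly in the paper, that a cycle through $e_{r-1}$ avoiding $e_r$ also avoids $e_r^{\ast}$). The only cosmetic difference is the endgame: you stop at $r=q-1$ and invoke the uniqueness of the $q$-cycle through $e_1,\ldots,e_{q-1}$, while the paper runs the induction to $t=q$ and then evaluates a specific skew-signing with $\omega'(e_i)=\omega(e_i)$; both yield the same contradiction.
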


\begin{proof}
Assume the contrary. We claim that for each $t\in \left \{  1,\ldots,q\right \} $ and for all skew-signing $\omega^{\prime}$ of
$(D,\omega)$, $\eta_{\omega^{\prime}}^{+}(e_{1},\ldots,e_{t})=\eta_{\omega^{\prime}}%
^{-}(e_{1},\ldots,e_{t})$. For this, we proceed by induction on $t$. The case $t=1$ is
assumed. Let $t\in \left \{  1,\ldots,q-1\right \}  $ and suppose that the claim
is true for $t$. Then%
\[
\left \{
\begin{array}
[c]{c}%
\eta_{\omega^{\prime}}^{+}(e_{1},\ldots,e_{t})=\eta_{\omega^{\prime}}%
^{+}(e_{1},e_{2},\ldots,e_{t},e_{t+1})+\eta_{\omega^{\prime}}^{+}(e_{1}%
,e_{2},\ldots,e_{t},\overline{e_{t+1}})\\
\eta_{\omega^{\prime}}^{-}(e_{1},\ldots,e_{t})=\eta_{\omega^{\prime}}%
^{-}(e_{1},e_{2},\ldots,e_{t},e_{t+1})+\eta_{\omega^{\prime}}^{-}(e_{1}%
,e_{2},\ldots,e_{t},\overline{e_{t+1}})
\end{array}
\right.
\]

Consider now the skew-signing $\omega^{\prime \prime}$ that coincides with
$\omega^{\prime}$ outside $\left \{  e_{t+1},e_{t+1}^{\ast}\right \}  $ and such
that $\omega^{\prime \prime}(e)=-\omega^{\prime}(e)$\ for $e\in \left \{
e_{t+1},e_{t+1}^{\ast}\right \}  $.

Then, we have
\[
\left \{
\begin{array}
[c]{c}%
\eta_{\omega^{\prime \prime}}^{+}(e_{1},\ldots,e_{t})=\eta_{\omega^{\prime}%
}^{-}(e_{1},e_{2},\ldots,e_{t},e_{t+1})+\eta_{\omega^{\prime}}^{+}(e_{1}%
,e_{2},\ldots,e_{t},\overline{e_{t+1}})\\
\eta_{\omega^{\prime \prime}}^{-}(e_{1},\ldots,e_{t})=\eta_{\omega^{\prime}%
}^{+}(e_{1},e_{2},\ldots,e_{t},e_{t+1})+\eta_{\omega^{\prime}}^{-}(e_{1}%
,e_{2},\ldots,e_{t},\overline{e_{t+1}})
\end{array}
\right.
\]

But by induction hypothesis, we have  $\eta_{\omega^{\prime \prime}}^{+}(e_{1},\ldots,e_{t})=\eta_{\omega
^{\prime \prime}}^{-}(e_{1},\ldots,e_{t})$ and $\eta_{\omega^{\prime}}%
^{+}(e_{1},\ldots,e_{t})=\eta_{\omega^{\prime}}^{-}(e_{1},\ldots,e_{t})$.

Then 

\[%
\begin{array}
[c]{cl}%
\eta_{\omega^{\prime}}^{+}(e_{1},\ldots,e_{t},\overline{e_{t+1}})-\eta
_{\omega^{\prime}}^{-}(e_{1},\ldots,e_{t},\overline{e_{t+1}}) &
=\eta_{\omega^{\prime}}^{-}(e_{1},\ldots,e_{t+1})-\eta_{\omega^{\prime}%
}^{+}(e_{1},\ldots,e_{t+1})\\
& =\eta_{\omega^{\prime}}^{+}(e_{1},\ldots,e_{t+1})-\eta_{\omega
^{\prime}}^{-}(e_{1},\ldots,e_{t+1})
\end{array}
\]

Thus $\eta_{\omega^{\prime}}^{+}(e_{1},\ldots,e_{t},e_{t+1})=\eta
_{\omega^{\prime}}^{-}(e_{1},e_{2},\ldots,e_{t},e_{t+1})$.

This complete the induction proof. For $t=q$ we have , $\eta_{\omega^{\prime}}%
^{+}(e_{1},\ldots,e_{q})=\eta_{\omega^{\prime}}^{-}(e_{1},\ldots,e_{q})$.

 Now, choose a skew-signing $\omega^{\prime}$ of $\left(  D,\omega \right)  $ such
that $\omega^{\prime}(e_{1})=\omega(e_{1}),\ldots,\omega^{\prime}%
(e_{q})=\omega(e_{q})$. Then, we have $\eta_{\omega^{\prime}}^{+}(e_{1}%
,\ldots,e_{q})=\overset{q}{\underset{i=1}{\prod}}\omega(e_{i})$ and$\  \eta
_{\omega^{\prime}}^{-}(e_{1},\ldots,e_{q})=0$, a contradiction. It follows
that there exists a skew-signing $\omega_{0}^{\prime}$ such that $\eta
_{\omega_{0}^{\prime}}^{+}(e_{1})\neq \eta_{\omega_{0}^{\prime}}^{-}(e_{1})$.
\end{proof}

\begin{pol}
 Assume for contradiction that $D$ contains an
even cycle of length $q\in \left \{  4,\ldots,l\right \}  $ and choose such a
cycle with $q$ as small as possible. We will use the notations of the
previous lemma. Let $\omega^{\prime \prime}$ be the skew-signing of
$(D,\omega)$ that coincides with $\omega_{0}^{\prime}$ outside $\left \{
e_{1},e_{1}^{\ast}\right \}  $ and such that $\omega^{\prime \prime}%
(e)=-\omega_{0}^{\prime}(e)$\ for $e\in \left \{  e_{1},e_{1}^{\ast}\right \}  $.
The charateristic\ polynomials of $(D,\omega_{0}^{\prime})$ and $(D,\omega
^{\prime \prime})$ are respectively denoted by $p_{(D,\omega_{0}^{\prime}%
)}(x):=x^{n}+b_{1}x^{n-1}+\cdots+b_{n-1}x+b_{n}$ and $p_{(D,\omega
^{\prime \prime})}(x):=x^{n}+c_{1}x^{n-1}+\cdots+c_{n-1}x+c_{n}$.

By the choice of $q$ and from the second equality of Corollary \ref{coeff}, we
have $b_{q}-c_{q}=-\underset{\overrightarrow{C}\in \overrightarrow{\mathcal{C}%
}_{q,\omega_{0}^{\prime}}^{+}}{\sum}\omega(\overrightarrow{C})+\underset
{\overrightarrow{C}\in \overrightarrow{\mathcal{C}}_{q,\omega_{0}^{\prime}}%
^{-}}{\sum}\omega(\overrightarrow{C})+\underset{\overrightarrow{C}%
\in \overrightarrow{\mathcal{C}}_{q,\omega"}^{+}}{\sum}\omega(\overrightarrow
{C})-\underset{\overrightarrow{C}\in \overrightarrow{\mathcal{C}}_{q,\omega
"}^{-}}{\sum}\omega(\overrightarrow{C})$.

Every cycle $\overrightarrow{C}$ of length $q$ that contains neither $e_{1}$
nor $e_{1}^{\ast}$ contributes $0$ to $b_{q}-c_{q}$. It follows that:

\[%
\begin{array}
[c]{cl}%
b_{q}-c_{q} & =-\eta_{\omega_{0}^{\prime}}^{+}(e_{1})-\eta_{\omega_{0}%
^{\prime}}^{+}(e_{1}^{\ast})+\eta_{\omega_{0}^{\prime}}^{-}(e_{1}%
)+\eta_{\omega_{0}^{\prime}}^{-}(e_{1}^{\ast})\\
& +\eta_{\omega^{\prime \prime}}^{+}(e_{1})+\eta_{\omega^{\prime \prime}}%
^{+}(e_{1}^{\ast})-\eta_{\omega^{\prime \prime}}^{-}(e_{1})-\eta_{\omega
^{\prime \prime}}^{-}(e_{1}^{\ast})
\end{array}
\]

By construction of $\omega^{\prime \prime}$, we have $\eta_{\omega
^{\prime \prime}}^{+}(e_{1})=\eta_{\omega_{0}^{\prime}}^{-}(e_{1})$,
$\eta_{\omega^{\prime \prime}}^{-}(e_{1})=\eta_{\omega_{0}^{\prime}}^{+}%
(e_{1})$, $\eta_{\omega^{\prime \prime}}^{+}(e_{1}^{\ast})=\eta_{\omega
_{0}^{\prime}}^{-}(e_{1}^{\ast})$, $\eta_{\omega^{\prime \prime}}^{-}%
(e_{1}^{\ast})=\eta_{\omega_{0}^{\prime}}^{+}(e_{1}^{\ast})$.

Then $b_{q}-c_{q}=-2(\eta_{\omega_{0}^{\prime}}^{+}(e_{1})+\eta_{\omega
_{0}^{\prime}}^{+}(e_{1}^{\ast}))+2(\eta_{\omega_{0}^{\prime}}^{-}(e_{1}%
)+\eta_{\omega_{0}^{\prime}}^{-}(e_{1}^{\ast}))$

As $(D,\omega)$ is $(\leq l)$-cycle-symmetric, we have $\eta_{\omega_{0}^{\prime}%
}^{+}(e_{1})=\eta_{\omega_{0}^{\prime}}^{+}(e_{1}^{\ast})$, $\eta_{\omega
_{0}^{\prime}}^{-}(e_{1}^{\ast})=\eta_{\omega_{0}^{\prime}}^{-}(e_{1})$ and
then $b_{q}-c_{q}=-4(\eta_{\omega_{0}^{\prime}}^{+}(e_{1})-\eta_{\omega
_{0}^{\prime}}^{-}(e_{1}))\neq0$, a contradiction. 
\end{pol}

\section{Proof of the main theorem}

The implication $ii)\Longrightarrow i)$ follows easily from Corollary
\ref{balanced diagonally simillar} and Proposition  \ref{skew balanced}. To prove
$i)\Longrightarrow ii)$ it suffices to use Proposition \ref{skew balanced} and
the next Lemma.

\begin{lem}
Let $(D,\omega)$ be a pwls-digraph. If the characteristic polynomial of
$(D,\omega^{\prime})$ is the same for any skew-signing $\omega^{\prime}$ of
$(D,\omega)$, then $(D,\omega)$ is cycle-symmetric.
\end{lem}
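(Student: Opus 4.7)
The plan is to prove by induction on $q\geq 2$ the joint statement $(*)_{q}$: $(D,\omega)$ is $(\leq q)$-cycle-symmetric and $D$ contains no even cycle of length in $\{4,\ldots,q\}$. The base case $q=2$ is immediate. For the step $(*)_{q-1}\Rightarrow(*)_{q}$ with $q\geq 3$, I compute the coefficient $b_{q}$ of $x^{n-q}$ in $p_{(D,\omega')}(x)$ and compare it across two skew-signings that differ on a single digon, exploiting that $b_{q}$ is independent of $\omega'$.

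To simplify $b_{q}$ via Formula (\ref{2}), I split $\overrightarrow{\mathcal{L}}_{q}$ according to whether $\overrightarrow{L}$ has a component that is an odd cycle of length at most $q-1$. These $\overrightarrow{L}$'s cancel pairwise by the standard involution of reversing a canonical such odd sub-cycle---say, the one containing the lex-minimum vertex lying on an odd sub-cycle of length $\leq q-1$---since the inductive $(\leq q-1)$-cycle-symmetry yields $\omega(\overrightarrow{L})=\omega(\overrightarrow{L}')$ while reversing an odd cycle flips the sign of $\omega'(\overrightarrow{L})$. What remains is either a single $q$-cycle or a disjoint union of even cycles of length $\leq q-1$ covering $q$ vertices; the inductive ``no even cycles of length $\{4,\ldots,q-1\}$'' forces the latter to be a union of digons only, whose combined contribution is a constant $K$ independent of $\omega'$ (each digon contributes $\omega'(d)=-\omega(d)$, and the signs reassemble to $+\omega(\overrightarrow{L})$). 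This is precisely why I track the joint statement $(*)_{q}$: without the ``no even cycles'' input, non-constant terms from shorter even cycles would survive. I obtain
\[
b_{q}(\omega') \;=\; -\sum_{\overrightarrow{C}\in\overrightarrow{\mathcal{C}}_{q}}\omega'(\overrightarrow{C}) + K.
\]

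Next, fix an arbitrary arc $e_{1}$ and let $\omega_{1}'$ be the skew-signing obtained from a reference $\omega_{0}'$ by flipping $\{e_{1},e_{1}^{\ast}\}$. Invariance of $b_{q}$ forces $\sum_{\overrightarrow{C}\in\overrightarrow{\mathcal{C}}_{q},\,\overrightarrow{C}\ni e_{1}\text{ or }e_{1}^{\ast}}\omega_{0}'(\overrightarrow{C})=0$. Pairing each such $\overrightarrow{C}$ with $\overrightarrow{C}^{\ast}$ and writing $\omega_{0}'(\overrightarrow{C})=\epsilon_{\overrightarrow{C}}(\omega_{0}')\,\omega(\overrightarrow{C})$ with $\epsilon_{\overrightarrow{C}^{\ast}}=(-1)^{q}\epsilon_{\overrightarrow{C}}$, this becomes
\[
\sum_{\overrightarrow{C}\in\overrightarrow{\mathcal{C}}_{q},\,\overrightarrow{C}\ni e_{1}}\epsilon_{\overrightarrow{C}}(\omega_{0}')\bigl[\omega(\overrightarrow{C})+(-1)^{q}\omega(\overrightarrow{C}^{\ast})\bigr]=0.
\]
Every skew-signing arises from $\omega_{0}'$ by flipping some subset $T$ of the digons of $D$, and such a flip multiplies $\epsilon_{\overrightarrow{C}}$ by $(-1)^{|\delta(\overrightarrow{C})\cap T|}$, where $\delta(\overrightarrow{C})$ denotes the set of digons of $D$ carrying the arcs of $\overrightarrow{C}$. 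The maps $T\mapsto(-1)^{|\delta(\overrightarrow{C})\cap T|}$ are characters of $\mathbb{F}_{2}^{m}$ (with $m$ the number of digons of $D$), pairwise distinct because two distinct $q$-cycles through the fixed oriented arc $e_{1}$ have distinct $\delta$-sets, and hence linearly independent as functions of $T$. This forces each bracket in the displayed sum to vanish: for $q$ odd I obtain $\omega(\overrightarrow{C})=\omega(\overrightarrow{C}^{\ast})$ for every $q$-cycle through $e_{1}$, so $(D,\omega)$ is $(\leq q)$-cycle-symmetric; for $q$ even, positivity of $\omega$ forbids $\omega(\overrightarrow{C})+\omega(\overrightarrow{C}^{\ast})=0$, so $D$ has no $q$-cycle through $e_{1}$. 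Arbitrariness of $e_{1}$ finishes $(*)_{q}$.

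The principal obstacle I anticipate is the Fourier-type linear-algebra step above. Its validity rests on the observation that a directed cycle of prescribed length through a fixed oriented arc is determined by the set of digons it occupies---the fixed arc forces an orientation at one edge of the underlying undirected cycle, and hence everywhere---so distinct $q$-cycles through $e_{1}$ really produce distinct characters. A smaller care is needed to make the canonical-odd-sub-cycle pairing in the earlier cancellation a genuine involution, for which a fixed total order on $V(D)$ suffices.
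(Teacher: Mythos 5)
Your proof is correct, and it proves the statement by a genuinely different route from the paper's. The paper argues by minimal counterexample: it takes a shortest cycle $\overrightarrow{C}_0$ with $\omega(\overrightarrow{C}_0)\neq\omega(\overrightarrow{C}_0^{\ast})$, uses its Lemma \ref{odd dicycle} to get the structural hypotheses below length $q$, and then runs an iterative arc-by-arc flipping argument (the $\eta^{\pm}$ and $N^{\pm}$ quantities of Lemma \ref{intermid} and Step~1) to produce one skew-signing witnessing $N^{+}(e_1)\neq N^{-}(e_1)$, from which a comparison of $b_q$ across two signings yields a contradiction. You instead run a single induction on $q$ carrying the strengthened statement ($(\leq q)$-cycle-symmetry \emph{and} no even cycles of length in $\{4,\ldots,q\}$), derive the same key identity $b_q(\omega')=-\sum_{\overrightarrow{C}\in\overrightarrow{\mathcal{C}}_q}\omega'(\overrightarrow{C})+K$, and then extract the vanishing of each individual bracket $\omega(\overrightarrow{C})+(-1)^{q}\omega(\overrightarrow{C}^{\ast})$ at once via linear independence of the characters $T\mapsto(-1)^{|\delta(\overrightarrow{C})\cap T|}$ of $\mathbb{F}_2^{m}$. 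The two steps you flag as delicate are indeed the load-bearing ones and both hold: a directed $q$-cycle ($q\geq 3$) through a fixed oriented arc is determined by its digon set, so the characters are pairwise distinct; and the canonical-odd-component reversal is a sign-reversing, weight-preserving involution thanks to the inductive $(\leq q-1)$-cycle-symmetry. Your approach buys a cleaner, positive (non-contradiction) argument that delivers statement ii) of the main theorem in one pass, absorbing the paper's Lemma \ref{odd dicycle} and Proposition \ref{skew balanced} (direction i)$\Rightarrow$ii)) into the induction; the paper's approach is more elementary in that it avoids the character-orthogonality machinery, replacing it by the hand-rolled telescoping of $\eta^{\pm}$ along the arcs of a single cycle, which is essentially the same Fourier argument restricted to the subgroup of flips supported on that cycle.
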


\begin{proof}

Assume for contradiction that $(D,\omega)$ is not cycle-symmetric and let
$\overrightarrow{C}_{0}$ be a shortest cycle of $D$ such that $\omega
(\overrightarrow{C}_{0})\neq \omega(\overrightarrow{C}_{0}^{\ast})$. We denote
by $v_{1},\ldots,v_{q}$ the vertices of $\overrightarrow{C}_{0}$ and
$e_{1}:=v_{1}v_{2},\ldots,e_{q-1}:=v_{q-1}v_{q}$, $e_{q}:=v_{q}v_{1}$ its
arcs. Let $\omega^{\prime}$ be a skew-signing of\textbf{ }$(D,\omega
)$\textbf{. }

For $h\in \{1,...,q\}$ and $r\in \{1,...,h\}$, we set%

\begin{tabular}
[c]{ll}%
$N_{\omega^{\prime}}^{+}(e_{1},\ldots,e_{r},\overline{e_{r+1}},\ldots
,\overline{e_{h}})=$ & $\eta_{\omega^{\prime}}^{+}(e_{1},\ldots,e_{r}%
,\overline{e_{r+1}},\ldots,\overline{e_{h}})$\\
& $+\eta_{\omega^{\prime}}^{+}(e_{1}^{\ast},\ldots,e_{r}^{\ast},\overline
{e_{r+1}^{\ast}},\ldots,\overline{e_{h}^{\ast}})$\\
$N_{\omega^{\prime}}^{-}(e_{1},\ldots,e_{r},\overline{e_{r+1}},\ldots
,\overline{e_{h}})=$ & $\eta_{\omega^{\prime}}^{-}(e_{1},\ldots,e_{r}%
,\overline{e_{r+1}},\ldots,\overline{e_{h}})$\\
& $+\eta_{\omega^{\prime}}^{-}(e_{1}^{\ast},\ldots,e_{r}^{\ast},\overline
{e_{r+1}^{\ast}},\ldots,\overline{e_{h}^{\ast}})$%
\end{tabular}

\textbf{Step 1} There exists a skew-signing $\omega_{0}^{\prime}$ of
$(D,\omega)$ such that $N_{\omega_{0}^{\prime}}^{+}(e_{1})\neq N_{\omega
_{0}^{\prime}}^{-}(e_{1})$.

Assume by contradiction that $N_{\omega^{\prime}}^{+}(e_{1})=N_{\omega
^{\prime}}^{-}(e_{1})$ for every skew-signing $\omega^{\prime}$ of
$(D,\omega)$. By using an induction process, we can deduce, as in the proof of
Lemma \ref{intermid}, that $N_{\omega^{\prime}}^{+}(e_{1},\ldots
,e_{q})=N_{\omega^{\prime}}^{-}(e_{1},\ldots,e_{q})$. However,
\[
N_{\omega^{\prime}}^{+}(e_{1},\ldots,e_{q})=\left \{
\begin{array}
[c]{l}%
\omega(\overrightarrow{C}_{0})+\omega(\overrightarrow{C}_{0}^{\ast})\text{
\  \  \  \  \  \ if }q\text{ is even and }\omega^{\prime}(\overrightarrow{C}%
_{0})>0\text{ }\\
0\text{ \  \  \  \  \  \  \  \  \  \  \  \  \  \  \  \  \  \  \  \  \  \  \  \  \ if }q\text{ is even
and }\omega^{\prime}(\overrightarrow{C}_{0})<0\\
\omega(\overrightarrow{C}_{0})\text{\  \  \  \  \  \  \  \  \  \  \  \  \  \  \  \  \  \  \ if
}q\text{ is odd and }\omega^{\prime}(\overrightarrow{C}_{0})>0\\
\omega(\overrightarrow{C}_{0}^{\ast}%
)\text{\  \  \  \  \  \  \  \  \  \  \  \  \  \  \  \  \  \  \ if }q\text{ is odd and }%
\omega^{\prime}(\overrightarrow{C}_{0})<0
\end{array}
\right.
\]

and%

\[
N_{\omega^{\prime}}^{-}(e_{1},\ldots,e_{q})=\left \{
\begin{array}
[c]{l}%
0\text{ \  \  \  \  \  \  \  \  \  \  \  \  \  \  \  \  \  \  \  \  \  \  \  \  \  \ if }q\text{ is
even and }\omega^{\prime}(\overrightarrow{C}_{0})>0\text{ }\\
\omega(\overrightarrow{C}_{0})+\omega(\overrightarrow{C}_{0}^{\ast
})\text{\  \  \  \  \  \  \  \ if }q\text{ is even and }\omega^{\prime}%
(\overrightarrow{C}_{0})<0\\
\omega(\overrightarrow{C}_{0}^{\ast})\text{
\  \  \  \  \  \  \  \  \  \  \  \  \  \  \  \  \  \  \ if }q\text{ is odd and }\omega^{\prime
}(\overrightarrow{C}_{0})>0\\
\omega(\overrightarrow{C}_{0})\text{ \  \  \  \  \  \  \  \  \  \  \  \  \  \  \  \  \  \  \ if
}q\text{ is odd and }\omega^{\prime}(\overrightarrow{C}_{0})<0
\end{array}
\right.
\]

which contradicts our assumption on $\overrightarrow{C}_{0}$. This complete
the proof of Step 1.

\textbf{Step 2.} $(D,\omega)$ is $(\leq q-1)$-cycle-symmetric and contains no
even cycles of length $k\in$ $\left \{  3,\ldots,q-1\right \}  $.

This follows from the choice of $q$ and lemma \ref{odd dicycle}.

Consider now the skew-signing $\omega^{\prime \prime}$ of $(D,\omega)$ that
coincides with $\omega^{\prime}$ outside $\left \{  e_{1},e_{1}^{\ast}\right \}
$ and such that $\omega^{\prime \prime}(e)=-\omega_{0}^{\prime}(e)$\ for
$e\in \left \{  e_{1},e_{1}^{\ast}\right \}  $. Let $p_{(D,\omega_{0}^{\prime}%
)}(x):=x^{n}+b_{1}x^{n-1}+\cdots+b_{n-1}x+b_{n}$ and $p_{(D,\omega
^{\prime \prime})}(x):=x^{n}+c_{1}x^{n-1}+\cdots+c_{n-1}x+c_{n}$ be the
charateristic\ polynomials of $(D,\omega_{0}^{\prime})$ and $(D,\omega
^{\prime \prime})$ respectively.

As in the proof of Lemma \ref{odd dicycle}, we have%
\[%
\begin{array}
[c]{cl}%
b_{q}-c_{q} & =-2(\eta_{\omega_{0}^{\prime}}^{+}(e_{1})+\eta_{\omega
_{0}^{\prime}}^{+}(e_{1}^{\ast}))+2(\eta_{\omega_{0}^{\prime}}^{-}(e_{1}%
)+\eta_{\omega_{0}^{\prime}}^{-}(e_{1}^{\ast}))\\
& =-2(N_{\omega_{0}^{\prime}}^{+}(e_{1})-N_{\omega_{0}^{\prime}}^{-}%
(e_{1}))\neq0
\end{array}
\]
which contradicts Step 1. This ends the proof of Lemma.
\end{proof}

\end{document}